\newtheorem{lemma}{Lemma}
\newtheorem{corollary}{Corollary}
\newtheorem{example}{Example}
\newcommand{\C}{\mathbb{C}}
\begin{document}

\title[]
{A New Proof of $\bf{Z_{Kup}=|Z_{Henn}|^2}$ for Semisimple Hopf Algebras}

\author{Liang Chang}
\email{liangchang@math.tamu.edu}
\address{Department of Mathematics\\
    Texas A$\&$M University \\
    College Station, TX 77843-3368\\
    U.S.A.}

\maketitle

\begin{abstract}

Hennings and Kuperberg defined quantum invariants $Z_{Henn}$ and $Z_{Kup}$ for closed oriented $3$-manifolds based on certain Hopf algebras,
respectively. When the Hopf algebras are semisimple, it is shown that $Z_{Kup}=|Z_{Henn}|^2$. In this paper, we present a new proof of this equality.

\end{abstract}

\section{Introduction}

Since the interaction between low-dimensional topology and quantum physics in the 1980s, many quantum invariants of 3-manifolds have been studied. A type of quantum invariants $Z_{Henn}$ was defined by Hennings \cite{H} and reformulated by Kauffman and Radford \cite{KR} based on finite dimensional ribbon Hopf algebras. More generally, Kuperberg defined the invariants $Z_{Kup}$ for closed framed 3-manifolds (\cite{Ku}), which are based on any finite dimensional Hopf algebras. If the Hopf algebras are semisimple, then these two types of quantum invariants are equal to the Reshetikhin-Turaev invariant $Z_{RT}$ \cite{ReTu}  and the Turaev-Viro invariant $Z_{TV}$ \cite{TuVi1}, which are defined from the representation categories of the input Hopf algebras, see \cite{BW} and \cite{Kerler1}. In this sense, $Z_{Henn}$ and $Z_{Kup}$ can be considered as non-semisimple generalizations of $Z_{TV}$ and $Z_{RT}$.

It is conjectured that $Z_{Kup}=|Z_{Henn}|^2$ for any finite dimensional ribbon Hopf algebras and any closed 3-manifolds, which generalizes the equality $Z_{TV}=|Z_{RT}|^2$ (\cite{TuVi2}, \cite{BK}). This equality was verified to be true for lens spaces in \cite{ChaWa}. For a general closed 3-manifold it has been proven implicitly for semisimple Hopf algebra because $Z_{Kup}=Z_{TV}$ and $Z_{Henn}=Z_{RT}$ when we can turn to work on the representation categories of the given Hopf algebras. In this paper, we prove this equality by completely working on Hopf algebras and using the diagram presentation introduced in\cite{ChaWa}. As a result, we have

\vspace{.1in}
\noindent
{\bf Theorem.}
{\it Let $H$ be a finite dimensional semisimple factorizable  Hopf algebra and $M$ be an oriented closed 3-manifold.  Then $Z_{Kup}(M,f,H)=Z_{Henn}(M\#\overline{M},H)$ for some suitably chosen framing $f$ on M.}
\vspace{.1in}

As a corollary, we have $Z_{Kup}(M,f,H)=|Z_{Henn}(M,H)|^2$ if $H$ is equipped with certain anti-linear automorphism (see the corollary in section 3).

Another motivation to consider these two types of 3-manifold invariant is that when the 3-manifold is fixed, $Z_{Kup}$ and $Z_{Henn}$ provide gauge invariants for Hopf algebras. Two Hopf algebras are said to be gauge equivalent if their representation categories are equivalent as tensor categories. For example, the Frobenius-Schur indicators of Hopf algebras are gauge invariants (\cite{KMN}, \cite{KSY}), which have important application to the representation theory of Hopf algebras and turns out to coincide with $Z_{Kup}$ for lens space \cite{ChaWa}. It is expected that $Z_{Kup}$ provides generalized Frobenius-Schur indicators and more general gauge invariants for any finite dimensional Hopf algebra (not necessarily semisimple). This is true for certain ribbon Hopf algebras due to the recent result in \cite{CheKer}. That is, $Z_{Henn}$ is shown to be gauge invariant for any factorizable ribbon Hopf algebra (\cite{CheKer}) and is related to $Z_{Kup}$ by the results in this paper and \cite{ChaWa}. The diagram presentation in this paper is useful in studying the gauge invariance of $Z_{Kup}$ and the discussion will appear in the subsequent paper.  

The paper is organized as follows. In Section $2$, we recall the definitions of the Hennings and Kuperberg invariants and set up our notations. Then we prove our theorem in Section $3$.

\section{Hennings and Kuperberg invariants}

\subsection{Some facts about Hopf algebras}

In this section, we recall necessary notations and structures of finite dimensional semisimple Hopf algebras over $\mathbb{C}$. Details can be found in \cite{Ra1}, \cite{Ra2} and \cite{KR2}. 

Let $H(m, \Delta, S, 1, \epsilon)$ be a finite dimensional Hopf algebra over $\C$ with multiplication $m$, comultiplication $\Delta$, antipode $S$, unit $1$, and counit $\epsilon$. In the following, we will apply the Sweedler notion, i.e.,
$\Delta^{(n-1)}(x)=\sum\limits_{(x)}x_{(1)}\otimes\ldots\otimes x_{(n)}$.

For finite dimensional semisimple Hopf algebras, there exists $\lambda\in H^*$, called \textit{two sided integral} (\textit{integral}, for short), such that $$(id\otimes\lambda)\Delta(x)=(\lambda\otimes id)\Delta(x)=\lambda(x)\cdot1.$$ Dually, there exists $\Lambda\in H$ called \textit{two sided cointegral} (\textit{cointegral}, for short), such that $$x\Lambda=\Lambda x=\varepsilon(x)\Lambda.$$ Such elements are unique up to a scalar multiple. Further more, an integral plays the role of invariant trace. Namely, for all $x,y\in H$,
$$\lambda(xy)=\lambda(yx),~~~~\lambda(S(x))=\lambda(x).$$
In the following, we choose a normalization so that $\lambda(\Lambda)=\lambda(S(\Lambda))=1$.

A {\it quasitriangular}  Hopf algebra $H$ is equipped with an $R$-matrix $R\in H\otimes H$ such that $$R\Delta(x)=R\Delta^{op}(x),~~(id\otimes\Delta)R=R_{13}R_{12},~~(\Delta\otimes id)R=R_{13}R_{23},$$ where $R_{ij}\in H\otimes H\otimes H$ be obtained
from $R=\sum_k s_k\otimes t_k$ by inserting the unit $1$ into the
tensor factor labeled by the index in $\{1,2,3\}\backslash \{i,j\}$. The {\it Drinfeld element} $u=\sum_k S(t_k)s_k$ satisfies $S^2(x)=uxu^{-1}$ for $x\in H$. Note that if $H$ is semisimple, then $u$ is a central element.

With $R$-matrix, one can define an algebra anti-homomorphism $f_{R_{21}R}:H^*\rightarrow H$, called \textit{Drinfeld map}. For any $p\in H^*$,
\begin{equation*}
f_{R_{21}R}(p):=\sum_{i,j}p(t'_js_i)s'_jt_i\tag{2.1}
\end{equation*}
where $R=\sum_i s_i\otimes t_i=\sum_j s'_j\otimes t'_j$. If $f_{R_{21}R}$ is a linear isomorphism, then $H$ is said to be {\it factorizable}. For a semisimple factorizable Hopf algebra $H$, the Drinfeld map sends an integral to a cointegral. That is,
$f_{R_{21}R}(\lambda)=\Lambda$ (see~\cite{CoWe2}).

A quasitrangular Hopf
algebra is said to be {\it ribbon} if there exists a central element
$\theta$ such that
$$\Delta(\theta)=(R_{21} R)^{-1}(\theta\otimes\theta),\ \ \epsilon(\theta)=1,\ \ \textrm{and} \ \ S(\theta)=\theta.$$
Here $R_{21}=\sum_k t_k\otimes s_k$. It can be shown that the \textit{balancing element} $G=u\theta^{-1}$ induces the antipode square $S^2$. That is,
$S^2(x)=GxG^{-1}$ for $x\in H$. When $H$ is semisimple, it has a canonical ribbon element $\theta=u$ and $G=1$.

The following are examples of finite dimensional semisimple factorizable ribbon Hopf algebras.

\begin{enumerate}

\item The group algebra $\mathbb{C}[\mathbb{Z}]$ becomes quasitriangular equipped with  $$R=\frac{1}{n}\sum^{n}_{a,b=0}e^{-\frac{2\pi iab}{n}}g^a\otimes g^b.$$ 
Then $\mathbb{C}[\mathbb{Z}_n]$ is ribbon since it is semisimple. Note that $\mathbb{C}[\mathbb{Z}_n]$ is factorizable if and only if $n$ is odd.

\item For a finite dimensional semisimple Hopf algebra $H$, its Drinfeld double $D(H)$ is semisimple and factorizable. 
\end{enumerate}

In the following, we review Hennings and Kuperberg invariants in the setting of finite dimensional semisimple factorizable Hopf algebras that are ribbon Hopf algebras with the canonical ribbon elements.

\subsection{Hennings invariant}

In 1990s, Hennings constructed invariant for any closed oriented 3-manifold using ribbon Hopf algebras with certain non-degenerated condition \cite{H}. Then Kauffman and Radford reformulated Hennings' construction via unoriented surgery diagrams \cite{KR}. Given a semisimple ribbon Hopf algebra $H$,  one can associate a regular isotopy invariant $TR(L,H)$ to a framed link $L$ as follows: given any link diagram of $L$, decorate each crossing
with the tensor factors from the
$R$-matrix $R=\sum_i s_i\otimes t_i$ as below.

\vspace{.1in}

\begin{tikzpicture}\centering
\draw (1,1)--(-1,-1);
\draw [color=white,line width=2mm] (-1,1)--(1,-1);
\draw (-1,1)--(1,-1);
\node (=) at (2,0.2) {$\leftrightarrow~\sum_i$};
\begin{scope}[xshift = 4cm]
\draw (-1,1)--(1,-1);
\draw (1,1)--(-1,-1);
\fill[black, opacity=1] (-0.5,0.5) circle (1.5pt) node[anchor=east] {$s_i$};
\fill[black, opacity=1] (0.5,0.5) circle (1.5pt) node[anchor=west] {$t_i$};
\end{scope}
\node (=) at (6,-0.7) {$;$};
\begin{scope}[xshift = 8cm]
\draw (-1,1)--(1,-1);
\draw [color=white,line width=2mm] (-1,-1)--(1,1);
\draw (-1,-1)--(1,1);
\node (=) at (2,0.2) {$\leftrightarrow~\sum_i$};
\begin{scope}[xshift = 4.4cm]
\draw (-1,1)--(1,-1);
\draw (1,1)--(-1,-1);
\fill[black, opacity=1] (-0.5,-0.5) circle (1.5pt) node[anchor=east] {$S(s_i)$};
\fill[black, opacity=1] (0.5,-0.5) circle (1.5pt) node[anchor=west] {$t_i$};
\end{scope}
\end{scope}
\end{tikzpicture}

\vspace{.1in}

Once all the crossings of $L$ have been decorated, one gets a labeled diagram immersed in the plane, whose crossings become $4$-valent vertices. The Hopf algebra elements can slide across maxima or minima with the antipode action or its inverse as below.\\
\[
\begin{tikzpicture}
  \draw [name path=circle-1] (-1,0) arc(0:180:1);
  \draw [color=white, name path=line](-3,0.5)--(3,0.5);
  \fill[black, opacity=1, name intersections={of= circle-1 and line}] (intersection-1) circle (1.5pt) node[anchor=west] {$x$};
  \node (=) at (-0.2,0.6) {=};
  \begin{scope}[xshift = 4cm]
  \draw [name path=circle-1] (-1,0) arc(0:180:1);
  \draw [color=white, name path=line](-3,0.5)--(3,0.5);
  \fill[black, opacity=1, name intersections={of= circle-1 and line}] (intersection-2) circle (1.5pt) node[anchor=east] {$S(x)$};
  \end{scope}
  \node (;) at (4,0) {$;$};
  \begin{scope}[xshift = 4cm]
  \draw [name path=circle-1] (1,1) arc(180:360:1);
  \draw [color=white, name path=line](-3,0.5)--(3,0.5);
  \fill[black, opacity=1, name intersections={of= circle-1 and line}] (intersection-1) circle (1.5pt) node[anchor=east] {$x$};
  \node (=) at (4,0.6) {=};
  \begin{scope}[xshift = 4cm]
  \draw [name path=circle-1] (1,1) arc(180:360:1);
  \draw [color=white, name path=line](-3,0.5)--(3,0.5);
  \fill[black, opacity=1, name intersections={of= circle-1 and line}] (intersection-2) circle (1.5pt) node[anchor=west] {$S(x)$};
  \end{scope}
  \end{scope}
\end{tikzpicture}
\]
Slide all the Hopf algebra elements on the same component into one vertical portion and multiply them upwards so that a product $w_i\in H$ is assigned to each component.\\
\[
\begin{tikzpicture}
\draw (0,1)--(0,-1);
\fill[black, opacity=1] (0,0.5) circle (1.5pt) node[anchor=west] {$y$};
\fill[black, opacity=1] (0,-0.5) circle (1.5pt) node[anchor=west] {$x$};
\node (=) at (1,0) {=};
\begin{scope}[xshift = 2cm]
\draw (0,1)--(0,-1);
\fill[black, opacity=1] (0,0) circle (1.5pt) node[anchor=west] {$xy$};
\end{scope}
\end{tikzpicture}
\]
Define $$TR(L,H)=\lambda(w_1)\cdots \lambda(w_{c(L)}),$$ where $c(L)$ denotes the number of components of $L$. Note that the full definition of $TR(L,H)$ evolves the Whitney degree of each component and powers of the balancing element $G$. (See \cite{KR}, \cite{ChaWa}). Here $G=1$ in our setting of canonical ribbon structure. 

If
$\lambda(\theta)\lambda(\theta^{-1})\neq 0$, which is always true when $H$ is factorizable \cite{CoWe2}, then
\begin{equation*}
Z_{Henn}(M(L),H)=[\lambda(\theta)\lambda(\theta^{-1})]^{-\frac{c(L)}{2}}[\lambda(\theta)/\lambda(\theta^{-1})]^{-\frac{\sigma(L)}{2}}TR(L,H)\tag{2.2}
\end{equation*}
is an invariant of the closed oriented $3$-manifold $M(L)$ obtained from surgery on the framed link $L$, where $\sigma(L)$ denotes the signature of the framing matrix of $L$.

\subsection{Kuperberg invariant}

From a finite dimensional Hopf algebra $H$, Kuperberg constructed invariant $Z_{Kup}(M,f,H)$ for any closed oriented 3-manifold $M$ with framing $f$ (\cite{Ku}). In the following, we recall Kuperberg invariant in the setting of semisimple Hopf algebra. For full definition, see \cite{Ku} and \cite{ChaWa}. 

Given a closed oriented $3$-manifold $M$, it can be obtained by gluing two handlebodies of genus $g$ along their boundaries. A Heegaard diagram consists of two families of $g$ simple closed curves on a genus $g$ closed oriented surface $F$ which tell us how to glue the two handlebodies. One family of simple closed curve are referred as lower circles and the other family as upper circles. Note that this choice is arbitrary. The orientation of $M$ induces an orientation on its Heegaard surface $F$, by the convention that a positive tangent basis at a point on $F$ extends to a positive basis for $M$ by appending a normal vector that points from the lower side to the upper side. 

A non-vanishing tangent vector field on $M$ is referred as a \textit{combing} on $M$. Any combing on $M$ can be represented completely  by a combing on the Heegaard diagram, which is a vector field on $F$ with $2g$ singularities of index $-1$, one on each circle, and one singularity of index $+2$ disjoint from all circles. The singularity of index $-1$ on a given circle, which is called the \textit{base point} of the circle, should not be a crossing and the two outward-pointing vectors from the base point should be tangent to the circle. Kuperberg showed that any combing on a Heegaard diagram of $M$ can be extended to a combing on $M$; conversely, any combing on $M$ is homotopic to an extension of some combing on the Heegaard diagram.

A \textit{framing} on $M$ consists of three orthogonal non-vanishing vector fields on $M$. It suffices to described a framing by two orthogonal non-vanishing vector field $b_1$ and $b_2$ (the third one is determined by the orientation of $M$). Suppose $b_1$ has been represented on the Heegaard diagram. Then $b_2$ can be described using \textit{twist front} that encodes how $b_2$ rotates relative to $b_1$.  For a factorizable Hopf algebra, $b_2$ contributes powers of the antipode square $S^2$ in the 3-manifold invariant (see \cite{Ku} and \cite{ChaWa} for detail) . When the Hopf algebra is semisimple, $S^2=id$ and so Kuperberg invariant only depends on a combing rather than a framing. 

To define Kuperberg invariant, orient all Heegaard circles
according to the orientation of $M$. Let $b_1$ be a combing on the Heegaard diagram. For each point $p$ on a circle $c$ with base point $o_c$, $\psi(p)$ is defined to be the
counterclockwise rotation of the tangent to $c$ relative to $b_1$ from $o_c$ to $p$ in units of $1=360^\circ$. If $p$ is a crossing of a lower circle and a upper circle, then two rotation angles $\psi_l(p)$ and $\psi_u(p)$ are defined, respectively. Then an integer $$a_p=2(\psi_l(p)-\psi_u(p))-\frac{1}{2}$$ is assigned to the crossing $p$.

The algorithm to write down Kuperberg invariant is as follows: assign a cointegral to each lower circle; do comultiplication for each cointegral and label each crossing with tensor product factors in the direction of lower circles  starting from their based points; apply $S^{a_p}$ to the tensor factor labeled at crossing $p$; multiply all labels in the direction of upper circles starting from their based points; evaluate the resulting products using integrals.
In short, the invariant is a summation:
\begin{eqnarray*}
  Z_{Kup}(M,f,H)=\sum\limits_{(\Lambda)}\prod_{\substack{upper \\ circles}}\lambda\big(\cdots S^{a_i}(\Lambda_{(i)})\cdots \big)
\end{eqnarray*}
In next section, an example of Kuperberg invariant is given for the framed Heegaard diagram in Fig.~\ref{fig:genus two}.

\section{Proof of Theorem}

In this section, we prove the theorem. To compute the Kuperberg invariant for a closed 3-manifold $M$, we construct a suitable framing $f$ on its Heegaard diagram. On the other hand, we calculate the Hennings invariant for $M\#\overline{M}$ whose surgery diagram is given by the chain-mail link (\cite{Ro}), where $\overline{M}$ denotes the manifold with the opposite orientation as $M$.

\subsection{$Z_{Kup}(M,f,H)$}

In the following, a 2-sphere $S^2$ is regarded as a plane together with the point at infinity. A genus $g$ Heegaard surface is obtained by attaching $g$ 1-handles (not drawn) to $2g$ discs in the plane. To do Heegaard decomposition, we can glue one genus $g$ handlebody from below and another one from above. By isotope, the attaching circles of the lower handlebody (called lower circles) can be always chosen to be the meridians of the handles. In our figures, they are drawn as horizontal lines from the left  disk to the right disk. Parts of them go through the handles above the plane and so are not drawn. For instance, Fig.~\ref{fig:Heegaard L(5,2)} and Fig.~\ref{fig:genus two} are the Heegaard diagrams of Lens space $L(5,2)$ and Poincare homology 3-sphere, respectively.

\begin{figure}\centering
\begin{tikzpicture}
  \draw [color=white, name path=hline-1] (-1.5,0.2)--(-2,0.2);
  \draw [color=white, name path=hline-2] (-1.5,-0.2)--(-2,-0.2);
  \draw [color=white, name path=hline-3] (2.5,0.2)--(3,0.2);
  \draw [color=white, name path=hline-4] (2.5,-0.2)--(3,-0.2);
  \draw [thick,name path=circle-1](-1.5,0) circle (0.5);
  \draw [thick,name path=circle-2](2.5,0) circle (0.5);
  \draw [thick, green](2,0)--(-1,0);
  \draw [thick, blue](-0.5,-0.4)--(-0.5,0.4);
  \draw [thick, blue](0,-0.4)--(0,0.4);
  \draw [thick, blue](0.5,-0.4)--(0.5,0.4);
  \draw [thick, blue](1,-0.4)--(1,0.4);
  \draw [thick, blue](1.5,-0.4)--(1.5,0.4);
  \draw [thick, blue](-0.5,0.4) arc(0:180:1 and 0.6);
  \draw [thick, blue](-2.5,0.4) arc(180:270:0.3 and 0.2);
  \draw [thick, blue, name intersections={of=circle-1 and hline-1}] (-2.2,0.2)--(intersection-1);
  \draw [thick, blue](0,0.4) arc(0:180:1.5 and 1.1);
  \draw [thick, blue](-3,0.4) arc(180:270:0.8 and 0.6);
  \draw [thick, blue, name intersections={of=circle-1 and hline-2}] (-2.2,-0.2)--(intersection-1);
  \draw [thick, blue](1,-0.4) arc(180:360:1.5 and 1.1);
  \draw [thick, blue](4,-0.4) arc(0:90:0.8 and 0.6);
  \draw [thick, blue, name intersections={of=circle-2 and hline-3}] (3.2,0.2)--(intersection-1);
  \draw [thick, blue](1.5,-0.4) arc(180:360:1 and 0.6);
  \draw [thick, blue](3.5,-0.4) arc(0:90:0.3 and 0.2);
  \draw [thick, blue, name intersections={of=circle-2 and hline-4}] (3.2,-0.2)--(intersection-1);
  \draw [thick, blue](1.5,0.4) arc(180:90:1 and 0.6);
  \draw [thick, blue](2.5,1) arc(90:0:2 and 1.4);
  \draw [thick, blue](4.5,-0.4) arc(360:180:2 and 1.5);
  \draw [thick, blue](1,0.4) arc(180:90:1.5 and 1.1);
  \draw [thick, blue](2.5,1.5) arc(90:0:2.5 and 1.9);
  \draw [thick, blue](5,-0.4) arc(360:180:2.5 and 2);
  \draw [thick, blue](0.5,0.4) arc(180:90:2 and 1.6);
  \draw [thick, blue](2.5,2) arc(90:0:3 and 2.4);
  \draw [thick, blue](5.5,-0.4) arc(360:180:3 and 2.5);
\end{tikzpicture}
\caption{}
\label{fig:Heegaard L(5,2)}
\end{figure}

The attaching circles of the upper handle body are called upper circles. There is a pairing $\sigma$ between the set of upper circles and the set of handles. Namely, a circle $c$ can be matched with exactly one handle $\sigma(c)$ such that an arc of $c$ passes between the two attaching discs of the handle $\sigma(c)$ and there are no other upper circles between this arc and the right attaching disc of $\sigma(c)$. This pairing can be obtained by isotopy.  Fig.~\ref{fig:isotopy} shows the isotopy to obtain such pairing. First, if there is no any arc passing between the two attaching discs of some handle, then an arc can be pulled to pair with this handle (e.g., see $c_2$ and $\sigma(c_2)$ in Fig.~\ref{fig:isotopy}). If an arc is in the middle of $c$ and $\sigma(c)$, then this arc can be pushed to the left attaching disc through the handle above the page (e.g., see $c_1$ and $\sigma(c_1)$ in Fig.~\ref{fig:isotopy}). 

\begin{figure}\centering
\begin{tikzpicture}[scale=0.6]
\begin{scope}
\draw [thick, name path=circle-1] (0,0) circle(1);
\draw [thick, name path=circle-2] (6,0) circle(1);
\draw [thick, name path=circle-3] (0,5) circle(1);
\draw [thick, name path=circle-4] (6,5) circle(1);
\draw[red, line width=0.33mm] (-1.3,3) to [out=0,in=-100] (3,7);
\draw[blue, line width=0.33mm] (-1.3,2) to [out=0,in=-100] (5,7);
\draw (2.35,5) node[anchor=west] {$c_1$};
\draw (2.8,2.9) node[anchor=west] {$c_2$};
\end{scope}
\begin{scope}[xshift=10cm]
\draw (0,3) node[anchor=south] {Isotopy};
\draw [->,line width=0.5mm] (-1.5,3)--(1.5,3);
\end{scope}
\begin{scope}[xshift=14cm]
\draw [thick, name path=circle-1] (0,0) circle(1);
\draw [thick, name path=circle-2] (6,0) circle(1);
\draw [thick, name path=circle-3] (0,5) circle(1);
\draw [thick, name path=circle-4] (6,5) circle(1);
\draw[red, line width=0.33mm] (-1.3,3) to [out=0,in=-90] (4.3,5) to [out=90,in=-90] (4,7);
\draw[blue, line width=0.33mm] (-1.3,2) to [out=25,in=-100] (4.3,-1) to [out=80,in=-140] (6,5) to [out=160,in=-80] (5,7);
\fill [white] (6,5) circle(0.96);
\draw[blue, line width=0.33mm] (0,5.6) to [out=0,in=90] (1.7,5) to [out=-90,in=0] (0,4.4);
\fill [white] (0,5) circle(0.96);
\draw (4.3,5) node[anchor=east] {$c_1$};
\draw (4.3,0) node[anchor=east] {$c_2$};
\draw (6,5) node {$\sigma(c_1)$};
\draw (6,0) node {$\sigma(c_2)$};
\end{scope}
\end{tikzpicture}
\caption{}
\label{fig:isotopy}
\end{figure}

Let us fix a choice of the pairing $\sigma$ and set up a combing on the Heegaard surface, which is described in Fig.~\ref{fig:vector field}. First, all lower circles are represented by the horizontal lines. The index $-1$ singularity (base point) on any lower circle $c_k^L$ is placed next to the left attaching disc so that no upper circles pass between them. For a upper circle $c_k^U$, by the pairing $\sigma$, there are no other upper circles passing between $c_k^U$ and $\sigma(c_k^U)$. Then the index $-1$ singularity (base point) on $c_k^U$ is placed at the point closest to the right attaching disc of $\sigma(c_k^U)$. When $c_k^L$ is close to the singularity on $c_k^U$, it goes slightly off to avoid this singularity. 

\begin{figure}\centering
\begin{tikzpicture}[scale=0.8, fill=white]
  \draw [thick, name path=circle-1] (0,0) circle(1);
  \draw [thick, name path=circle-2] (13,0) circle(1);
  \draw [dashed] (-2,-2) to [out=0,in=-30] (2,-0.8) to [out=150,in=-60] (0,0); 
  \draw [->,line width=0.4mm] (0.15,-1.85)--(0.2,-1.84);
  \draw [->,line width=0.4mm] (1.4,-0.59)--(1.35,-0.58);
  \draw [dashed] (-2,2) to [out=0,in=30] (2,0.8) to [out=-150,in=60] (0,0); 
  \draw [->,line width=0.4mm] (0.15,1.86)--(0.2,1.84);
  \draw [->,line width=0.4mm] (1.4,0.59)--(1.35,0.58);
  \draw [dashed] (-2,3) to [out=0,in=90] (4,0) to [out=-90,in=0] (-2,-3); 
  \draw [->,line width=0.4mm] (0.1,2.87)--(0.15,2.86);
  \draw [->,line width=0.4mm] (3.4,1.65)--(3.45,1.6);
  \draw [->,line width=0.4mm] (0.1,-2.87)--(0.15,-2.86);
  \draw [->,line width=0.4mm] (3.4,-1.65)--(3.45,-1.6); 
  \draw [dashed] (15,-2) to [out=180,in=-150] (11,-0.8) to [out=30,in=-120] (13,0); 
  \draw [->,line width=0.4mm] (12.8,-1.84)--(12.85,-1.85);
  \draw [->,line width=0.4mm] (11.57,-0.6)--(11.52,-0.61);
  \draw [dashed] (15,2) to [out=180,in=150] (11,0.8) to [out=-30,in=120] (13,0); 
  \draw [->,line width=0.4mm] (12.8,1.84)--(12.85,1.85);
  \draw [->,line width=0.4mm] (11.57,0.6)--(11.52,0.61);
  \draw [dashed] (15,3) to [out=180,in=90] (9,0) to [out=-90,in=180] (15,-3);  
  \draw [->,line width=0.4mm] (12.85,2.89)--(12.9,2.9);
  \draw [->,line width=0.4mm] (9.55,1.55)--(9.6,1.6);
  \draw [->,line width=0.4mm] (12.85,-2.89)--(12.9,-2.9);
  \draw [->,line width=0.4mm] (9.55,-1.55)--(9.6,-1.6); 
  \draw [dashed] (-2,4) to [out=0,in=180] (6.5,2.7) to [out=0,in=180] (15,4); 
  \draw [->,line width=0.4mm] (0.1,3.82)--(0.15,3.81);
  \draw [->,line width=0.4mm] (6.5,2.7)--(6.55,2.7);
  \draw [->,line width=0.4mm] (12.75,3.82)--(12.8,3.83);
  \draw [dashed, line width=0.5mm] (-2,0)--(15,0);
  \draw [->, dashed, line width=0.4mm] (-1.44,0)--(-1.43,0);
  \draw [->, dashed, line width=0.4mm] (2.7,0)--(2.65,0);
  \draw [->, dashed, line width=0.4mm] (5.25,0)--(5.3,0);
  \draw [->, dashed, line width=0.4mm] (7.86,0)--(7.87,0);
  \draw [->, dashed, line width=0.4mm] (10.16,0)--(10.15,0);
  \draw [->, dashed, line width=0.4mm] (14.52,0)--(14.53,0);
  \draw [dashed] (-2,-4) to [out=0,in=180] (6.5,-2.7) to [out=0,in=180] (15,-4); 
  \draw [->,line width=0.4mm] (0.1,-3.81)--(0.15,-3.8);
  \draw [->,line width=0.4mm] (6.5,-2.7)--(6.55,-2.7);
  \draw [->,line width=0.4mm] (12.75,-3.82)--(12.8,-3.83);
  \fill [white] (0,0) circle(1);
  \fill [white] (13,0) circle(1);
  \fill [white] (0,-5) circle(1);
  \fill [white] (13,-5) circle(1);
  \draw [green, thick, rounded corners=5pt] (1,0)--(8.4,0)--(8.7,-0.5)--(9.3,-0.5)--(9.6,0)--(12,0);
  \draw [blue, thick] (9,-1.5)--(9,1.5);
  \draw (4.5,0) node[anchor=north] {$c_k^L$};
  \draw (9,1) node[anchor=east] {$c_k^U$};
  \draw (13,0) node {$\sigma(c_k^U)$};
\end{tikzpicture}
\caption{}
\label{fig:vector field}
\end{figure}

Fig.~\ref{fig:vector field} provides a local picture around one handle. The dashed lines indicate the flow of the vector field. The vector field flows parallel through the handle. Several copies of such local vector fields stack up and down to form a global vector field. An genus two example is drawn in Fig.~\ref{fig:genus two}. Finally, the index  $+2$ singularity is located at the infinity. 

The orientation of the Heegaard surface is chosen by setting its normal vector upwards through the paper. The lower circles are oriented from left to right while the upper circles are going upwards from the base point. To write down the Kuperberg invariant from the diagram, one splits cointegrals into coproduct factors and label them at the intersections of the lower and upper circles then multiply the coproduct factors following the direction of upper circles. $S^{a_p}$ acts on the coproduct factors according to the angle relative to the combing. Now we calculate the change of the power of $S$ when traveling along the circles.

\begin{lemma}\label{Kuperberglemma-1}
(1) The power of $S$ remains unchanged when traveling along a vertical arc.\\
(2) The power of $S$ increases by $1$ when passing through an extremum in a counterclockwise direction;\\
(3) The power of $S$ decreases by $1$ when passing through an extremum in a clockwise direction;
\end{lemma}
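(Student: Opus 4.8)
The plan is to reduce all three statements to one elementary computation: the net rotation of the tangent to a Heegaard circle relative to the combing $b_1$, measured in turns ($1=360^\circ$), as the point $p$ moves along the circle. Recall that the exponent at a crossing is $a_p=2(\psi_l(p)-\psi_u(p))-\tfrac12$, and that $\psi(p)$ is precisely the counterclockwise rotation of the tangent relative to $b_1$ from the base point to $p$. Since the constant $-\tfrac12$ and the separation into a lower and an upper contribution are fixed, the only part of the exponent that varies as $p$ travels along a single circle is the term $2\psi$. Hence "the power of $S$" is exactly $2\psi$, and it suffices to compute the change $2\,\Delta\psi$ of this quantity on each type of arc using the explicit combing of Fig.~\ref{fig:vector field}.

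For statement (1) I would use that, by construction of the combing, along a straight (vertical) portion of a circle the integral curves of $b_1$ run parallel to that portion, so both the tangent to the circle and $b_1$ keep a constant direction. Their relative angle $\psi$ is therefore constant along the arc, and $2\,\Delta\psi=0$; the power of $S$ is unchanged. The only thing to check is that the curving of $b_1$ near the handles contributes no extra winding along these straight arcs, which is exactly why the combing in Fig.~\ref{fig:vector field} is arranged to flow parallel through the handles.

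For statements (2) and (3) I would examine an extremum, i.e.\ a turning point (a cap or cup) where the circle reverses its direction relative to the flow of $b_1$. On the small arc surrounding such a point the combing $b_1$ is nonsingular and hence has (to first order) constant direction, while the oriented tangent to the circle reverses from one $b_1$-parallel direction to the opposite one, sweeping out exactly half of a full turn. Thus $\Delta\psi=\pm\tfrac12$ and the power of $S$ changes by $2\,\Delta\psi=\pm1$. The sign is fixed by the sense of the turn: a counterclockwise passage rotates the tangent by $+\tfrac12$, giving $+1$ (statement (2)), and a clockwise passage by $-\tfrac12$, giving $-1$ (statement (3)), where counterclockwise and clockwise are taken with respect to the orientation of the Heegaard surface fixed earlier.

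The main obstacle is the orientation bookkeeping rather than any hard analysis: one must pin down the sense of "counterclockwise'' consistently with the surface orientation and with the induced orientations of the lower and upper circles, and one must confirm that the tangent genuinely sweeps a full half-turn at each extremum (and not some other amount). The latter is guaranteed because at a turning point the oriented tangent passes from a direction aligned with $b_1$ to the exactly opposite one; combined with the nonsingularity of $b_1$ there, the relative winding is precisely $\pm\tfrac12$. Once the signs are fixed, the three cases follow at once from the local pictures, and the value of $a_p$ at any crossing is then recovered by summing these elementary contributions along the circles starting from the base points.
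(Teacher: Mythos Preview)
Your overall strategy---track how $a_p$ changes between consecutive crossings by computing the rotation of tangents relative to $b_1$---is the same as the paper's. But the execution has a real gap. You collapse the formula $a_p=2(\psi_l(p)-\psi_u(p))-\tfrac12$ to a single ``$2\psi$'', asserting that only one rotation varies as $p$ moves along a circle. This is not so: two consecutive crossings along an upper circle lie on (possibly different) lower circles, at different positions, so \emph{both} $\psi_l$ and $\psi_u$ can change, and the paper accordingly computes $\Delta a = 2\Delta\psi_L - 2\Delta\psi_U$ with the two pieces kept separate. That $\Delta\psi_L=0$ is not automatic; it holds because, in the specific combing of Fig.~3, every lower circle is drawn tangent to $b_1$ at every crossing. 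You need to invoke this feature explicitly rather than absorb it into ``the separation \ldots\ is fixed''. Once you do keep the two pieces, note the sign: the upper contribution enters as $-2\psi_u$, so a naive ``counterclockwise half-turn with constant $b_1$'' would give $\Delta a=-1$, not $+1$; the paper's value $\Delta\psi_U=-\tfrac12$ comes from the actual combing in Fig.~4, not from assuming $b_1$ is locally constant.

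Two smaller points. Your description of the combing is off: along the vertical portions of an upper circle $b_1$ is \emph{transverse} (horizontal), not parallel, and at the crossings the upper tangent is perpendicular to $b_1$, not ``$b_1$-parallel''. The conclusion for (1) survives (the angle is still constant), but your stated reason does not. Also, the extremum arc joins two crossings at finite distance, so ``$b_1$ is nonsingular hence to first order constant'' does not control the rotation of $b_1$ along the whole arc; the paper instead reads off the change directly from the chosen flow lines. Finally, the paper separately treats the case where the extremum arc passes between two different handles (Fig.~5), reducing it to the one-handle picture; your argument should include this reduction as well.
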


\begin{proof}
(1) is obvious because the angle relative to the combing does not change when traveling transversely along a vertical arc. Because of the same reason, y. If the arc passes through two handles, as Fig.~\ref{fig:S change-2}, then the calculation can be reduced to the case within one handle as Fig.~\ref{fig:S change-1}. Thus (2) be examined just for the local picture one handle as Fig.~\ref{fig:S change-1}. 

Suppose $S^{a_j}(\Lambda_{j})$ and  $S^{a_{j+1}}(\Lambda'_{j+1})$ are successive terms in the product along a upper circle. Here $\Lambda$ and $\Lambda'$ are two copies of cointegral. Then
$$a_{j+1}-a_j=2(\psi_L(\Lambda'_{j+1})-\psi_L(\Lambda_{j}))-2(\psi_U(\Lambda'_{j+1})-\psi_U(\Lambda_{j}))$$
In the case shown in Fig.~\ref{fig:S change-1}, we move from the coproduct factor $\Lambda_{j}$ to $\Lambda_{j+1}$ through an extremum counterclockwise, then 
$$a_{j+1}-a_j=2\cdot0-2(-\frac{1}{2})=1$$
The clockwise case (3) can be verified similarly. 

\begin{figure}\centering
\begin{tikzpicture}[scale=0.8]\footnotesize
  \draw [thick, name path=circle-1] (0,0) circle(1);
  \draw [thick, name path=circle-2] (13,0) circle(1);
  \draw [dashed] (-2,-2) to [out=0,in=-30] (2,-0.8) to [out=150,in=-60] (0,0); 
  \draw [->,line width=0.4mm] (0.15,-1.85)--(0.2,-1.84);
  \draw [->,line width=0.4mm] (1.4,-0.59)--(1.35,-0.58);
  \draw [dashed] (-2,2) to [out=0,in=30] (2,0.8) to [out=-150,in=60] (0,0); 
  \draw [->,line width=0.4mm] (0.15,1.86)--(0.2,1.84);
  \draw [->,line width=0.4mm] (1.4,0.59)--(1.35,0.58);
  \draw [dashed] (-2,3) to [out=0,in=90] (4,0) to [out=-90,in=0] (-2,-3); 
  \draw [->,line width=0.4mm] (0.1,2.87)--(0.15,2.86);
  \draw [->,line width=0.4mm] (3.4,1.65)--(3.45,1.6);
  \draw [->,line width=0.4mm] (0.1,-2.87)--(0.15,-2.86);
  \draw [->,line width=0.4mm] (3.4,-1.65)--(3.45,-1.6); 
  \draw [dashed] (15,-2) to [out=180,in=-150] (11,-0.8) to [out=30,in=-120] (13,0); 
  \draw [->,line width=0.4mm] (12.8,-1.84)--(12.85,-1.85);
  \draw [->,line width=0.4mm] (11.57,-0.6)--(11.52,-0.61);
  \draw [dashed] (15,2) to [out=180,in=150] (11,0.8) to [out=-30,in=120] (13,0); 
  \draw [->,line width=0.4mm] (12.8,1.84)--(12.85,1.85);
  \draw [->,line width=0.4mm] (11.57,0.6)--(11.52,0.61);
  \draw [dashed] (15,3) to [out=180,in=90] (9,0) to [out=-90,in=180] (15,-3);  
  \draw [->,line width=0.4mm] (12.85,2.89)--(12.9,2.9);
  \draw [->,line width=0.4mm] (9.55,1.55)--(9.6,1.6);
  \draw [->,line width=0.4mm] (12.85,-2.89)--(12.9,-2.9);
  \draw [->,line width=0.4mm] (9.55,-1.55)--(9.6,-1.6); 
  \draw [dashed] (-2,5)--(15,5);
  \draw [->,line width=0.4mm] (1.6,5)--(1.7,5);
  \draw [->,line width=0.4mm] (6.5,5)--(6.6,5);
  \draw [->,line width=0.4mm] (11.4,5)--(11.5,5);
  \draw [dashed] (-2,4) to [out=0,in=180] (6.5,2.7) to [out=0,in=180] (15,4); 
  \draw [->,line width=0.4mm] (0.1,3.82)--(0.15,3.81);
  \draw [->,line width=0.4mm] (6.5,2.7)--(6.55,2.7);
  \draw [->,line width=0.4mm] (12.75,3.82)--(12.8,3.83);
  \draw [dashed] (-2,0)--(15,0);
  \draw [->, dashed, line width=0.4mm] (-1.44,0)--(-1.43,0);
  \draw [->, dashed, line width=0.4mm] (2.7,0)--(2.65,0);
  \draw [->, dashed, line width=0.4mm] (4.95,0)--(5,0);
  \draw [->, dashed, line width=0.4mm] (7.76,0)--(7.77,0);
  \draw [->, dashed, line width=0.4mm] (10.16,0)--(10.15,0);
  \draw [->, dashed, line width=0.4mm] (14.52,0)--(14.53,0);
  \draw [dashed] (-2,-4) to [out=0,in=180] (6.5,-2.7) to [out=0,in=180] (15,-4); 
  \draw [->,line width=0.4mm] (0.1,-3.81)--(0.15,-3.8);
  \draw [->,line width=0.4mm] (6.5,-2.7)--(6.55,-2.7);
  \draw [->,line width=0.4mm] (12.75,-3.82)--(12.8,-3.83);
  \draw [dashed] (-2,-5)--(15,-5);
  \draw [->,line width=0.4mm] (1.6,-5)--(1.7,-5);
  \draw [->,line width=0.4mm] (6.5,-5)--(6.6,-5);
  \draw [->,line width=0.4mm] (11.4,-5)--(11.5,-5);
  \fill [white] (0,0) circle(1);
  \fill [white] (13,0) circle(1);
  \fill [white] (0,-6) circle(1);
  \fill [white] (13,-6) circle(1);
  \draw [green, thick, rounded corners=5pt] (1,0)--(8.4,0)--(8.7,-0.5)--(9.3,-0.5)--(9.6,0)--(12,0);
  \draw [blue, thick] (4.3,-1.5) to [out=90,in=180] (6,1.5) to [out=0,in=90] (7.5,-1.5);
  \draw (5.8,0) node[anchor=north] {$S^{a_j+1}(\Lambda'_{j+1})$};
  \draw (8.1,0.1) node[anchor=south] {$S^{a_j}(\Lambda_j)$};
\end{tikzpicture}  
\caption{}
\label{fig:S change-1}
\end{figure}

\begin{figure}\centering
\begin{tikzpicture}[scale=0.8]\footnotesize
\begin{scope}
  \draw [thick, name path=circle-1] (0,0) circle(1);
  \draw [thick, name path=circle-2] (13,0) circle(1);
  \draw [dashed] (-2,-2) to [out=0,in=-30] (2,-0.8) to [out=150,in=-60] (0,0); 
  \draw [->,line width=0.4mm] (0.15,-1.85)--(0.2,-1.84);
  \draw [->,line width=0.4mm] (1.4,-0.59)--(1.35,-0.58);
  \draw [dashed] (-2,2) to [out=0,in=30] (2,0.8) to [out=-150,in=60] (0,0); 
  \draw [->,line width=0.4mm] (0.15,1.86)--(0.2,1.84);
  \draw [->,line width=0.4mm] (1.4,0.59)--(1.35,0.58);
  \draw [dashed] (-2,3) to [out=0,in=90] (4,0) to [out=-90,in=0] (-2,-3); 
  \draw [->,line width=0.4mm] (0.1,2.87)--(0.15,2.86);
  \draw [->,line width=0.4mm] (3.4,1.65)--(3.45,1.6);
  \draw [->,line width=0.4mm] (0.1,-2.87)--(0.15,-2.86);
  \draw [->,line width=0.4mm] (3.4,-1.65)--(3.45,-1.6); 
  \draw [dashed] (15,-2) to [out=180,in=-150] (11,-0.8) to [out=30,in=-120] (13,0); 
  \draw [->,line width=0.4mm] (12.8,-1.84)--(12.85,-1.85);
  \draw [->,line width=0.4mm] (11.57,-0.6)--(11.52,-0.61);
  \draw [dashed] (15,2) to [out=180,in=150] (11,0.8) to [out=-30,in=120] (13,0); 
  \draw [->,line width=0.4mm] (12.8,1.84)--(12.85,1.85);
  \draw [->,line width=0.4mm] (11.57,0.6)--(11.52,0.61);
  \draw [dashed] (15,3) to [out=180,in=90] (9,0) to [out=-90,in=180] (15,-3);  
  \draw [->,line width=0.4mm] (12.85,2.89)--(12.9,2.9);
  \draw [->,line width=0.4mm] (9.55,1.55)--(9.6,1.6);
  \draw [->,line width=0.4mm] (12.85,-2.89)--(12.9,-2.9);
  \draw [->,line width=0.4mm] (9.55,-1.55)--(9.6,-1.6); 
  \draw [dashed] (-2,4) to [out=0,in=180] (6.5,2.7) to [out=0,in=180] (15,4); 
  \draw [->,line width=0.4mm] (0.1,3.82)--(0.15,3.81);
  \draw [->,line width=0.4mm] (6.5,2.7)--(6.55,2.7);
  \draw [->,line width=0.4mm] (12.75,3.82)--(12.8,3.83);
  \draw [dashed] (-2,0)--(15,0);
  \draw [->, dashed, line width=0.4mm] (-1.44,0)--(-1.43,0);
  \draw [->, dashed, line width=0.4mm] (2.7,0)--(2.65,0);
  \draw [->, dashed, line width=0.4mm] (4.95,0)--(5,0);
  \draw [->, dashed, line width=0.4mm] (7.36,0)--(7.37,0);
  \draw [->, dashed, line width=0.4mm] (10.16,0)--(10.15,0);
  \draw [->, dashed, line width=0.4mm] (14.52,0)--(14.53,0);
  \draw [dashed] (-2,-4) to [out=0,in=180] (6.5,-2.7) to [out=0,in=180] (15,-4); 
  \draw [->,line width=0.4mm] (0.1,-3.81)--(0.15,-3.8);
  \draw [->,line width=0.4mm] (6.5,-2.7)--(6.55,-2.7);
  \draw [->,line width=0.4mm] (12.75,-3.82)--(12.8,-3.83);
  \fill [white] (0,0) circle(1);
  \fill [white] (13,0) circle(1);
  \fill [white] (0,-10) circle(1);
  \fill [white] (13,-10) circle(1);
  \draw [green, thick, rounded corners=5pt] (1,0)--(8.4,0)--(8.7,-0.5)--(9.3,-0.5)--(9.6,0)--(12,0);
  \draw [blue, thick] (5.2,1.5) to [out=-90,in=70] (4,-5);
  \draw (6.7,0) node[anchor=south] {$S^{a_j-1}(\Lambda'_{j+1})$};
\end{scope}
\begin{scope}[yshift=-10cm]
  \draw [thick, name path=circle-1] (0,0) circle(1);
  \draw [thick, name path=circle-2] (13,0) circle(1);
  \draw [dashed] (-2,-2) to [out=0,in=-30] (2,-0.8) to [out=150,in=-60] (0,0); 
  \draw [->,line width=0.4mm] (0.15,-1.85)--(0.2,-1.84);
  \draw [->,line width=0.4mm] (1.4,-0.59)--(1.35,-0.58);
  \draw [dashed] (-2,2) to [out=0,in=30] (2,0.8) to [out=-150,in=60] (0,0); 
  \draw [->,line width=0.4mm] (0.15,1.86)--(0.2,1.84);
  \draw [->,line width=0.4mm] (1.4,0.59)--(1.35,0.58);
  \draw [dashed] (-2,3) to [out=0,in=90] (4,0) to [out=-90,in=0] (-2,-3); 
  \draw [->,line width=0.4mm] (0.1,2.87)--(0.15,2.86);
  \draw [->,line width=0.4mm] (3.4,1.65)--(3.45,1.6);
  \draw [->,line width=0.4mm] (0.1,-2.87)--(0.15,-2.86);
  \draw [->,line width=0.4mm] (3.4,-1.65)--(3.45,-1.6); 
  \draw [dashed] (15,-2) to [out=180,in=-150] (11,-0.8) to [out=30,in=-120] (13,0); 
  \draw [->,line width=0.4mm] (12.8,-1.84)--(12.85,-1.85);
  \draw [->,line width=0.4mm] (11.57,-0.6)--(11.52,-0.61);
  \draw [dashed] (15,2) to [out=180,in=150] (11,0.8) to [out=-30,in=120] (13,0); 
  \draw [->,line width=0.4mm] (12.8,1.84)--(12.85,1.85);
  \draw [->,line width=0.4mm] (11.57,0.6)--(11.52,0.61);
  \draw [dashed] (15,3) to [out=180,in=90] (9,0) to [out=-90,in=180] (15,-3);  
  \draw [->,line width=0.4mm] (12.85,2.89)--(12.9,2.9);
  \draw [->,line width=0.4mm] (9.55,1.55)--(9.6,1.6);
  \draw [->,line width=0.4mm] (12.85,-2.89)--(12.9,-2.9);
  \draw [->,line width=0.4mm] (9.55,-1.55)--(9.6,-1.6); 
  \draw [dashed] (-2,5)--(15,5);
  \draw [->,line width=0.4mm] (1.6,5)--(1.7,5);
  \draw [->,line width=0.4mm] (6.5,5)--(6.6,5);
  \draw [->,line width=0.4mm] (11.4,5)--(11.5,5);
  \draw [dashed] (-2,4) to [out=0,in=180] (6.5,2.7) to [out=0,in=180] (15,4); 
  \draw [->,line width=0.4mm] (0.1,3.82)--(0.15,3.81);
  \draw [->,line width=0.4mm] (6.5,2.7)--(6.55,2.7);
  \draw [->,line width=0.4mm] (12.75,3.82)--(12.8,3.83);
  \draw [dashed] (-2,0)--(15,0);
  \draw [->, dashed, line width=0.4mm] (-1.44,0)--(-1.43,0);
  \draw [->, dashed, line width=0.4mm] (2.7,0)--(2.65,0);
  \draw [->, dashed, line width=0.4mm] (5,0)--(5.05,0);
  \draw [->, dashed, line width=0.4mm] (7.16,0)--(7.17,0);
  \draw [->, dashed, line width=0.4mm] (10.16,0)--(10.15,0);
  \draw [->, dashed, line width=0.4mm] (14.52,0)--(14.53,0);
  \draw [dashed] (-2,-4) to [out=0,in=180] (6.5,-2.7) to [out=0,in=180] (15,-4); 
  \draw [->,line width=0.4mm] (0.1,-3.81)--(0.15,-3.8);
  \draw [->,line width=0.4mm] (6.5,-2.7)--(6.55,-2.7);
  \draw [->,line width=0.4mm] (12.75,-3.82)--(12.8,-3.83);
  \fill [white] (0,0) circle(1);
  \fill [white] (13,0) circle(1);
  \fill [white] (0,-5) circle(1);
  \fill [white] (13,-5) circle(1);
  \draw [green, thick, rounded corners=5pt] (1,0)--(8.4,0)--(8.7,-0.5)--(9.3,-0.5)--(9.6,0)--(12,0);
  \draw [blue, thick] (5.7,1.5) to [out=-90,in=0] (1.5,-3.5) to [out=180,in=-90] (-1.7,0) to [out=90,in=-110] (4,5);
  \draw (6.4,-0.1) node[anchor=north] {$S^{a_j}(\Lambda_j)$};
\end{scope}
\end{tikzpicture}  
\caption{}
\label{fig:S change-2}
\end{figure}

\end{proof}

\begin{example}
Fig.~\ref{fig:genus two} is a combed Heegaard diagram of Poincare homology 3-sphere. Let $\Lambda$ and $\Lambda'$ be two copies of cointegrals. Their coproduct factors are labeled and multiplied going up from the base points of each upper circle respectively. Then
\begin{align*}Z_{Kup}=\sum\limits_{(\Lambda)}&\lambda\left(S(\Lambda'_{(3)})\Lambda_{(1)}\Lambda'_{(5)}S^{-1}(\Lambda_{(3)})\Lambda'_{(1)}S^{-1}(\Lambda_{(5)})\right)\\
&\cdot\lambda\left(\Lambda_{(4)}S(\Lambda'_{(4)})S(\Lambda'_{(2)})\Lambda_{(2)}\Lambda'_{(6)}\Lambda'_{(7)}\Lambda'_{(8)}\right)
\end{align*}
\end{example}

\subsection{$Z_{Henn}(M\#\overline{M},H)$}
Now, we use the chain-mail link to evaluate the Hennings invariant for $M\#\overline{M}$. A Heegaard diagram of $M$ can be turned into a surgery diagram of $M\#\overline{M}$ by pushing the upper circles into the lower handle body slightly. Then the upper circles and the lower circles form a link $L_M$. All these curves are framed by thickening them into thin bands parallel to the Heegaard surface. The resulting link $L_M$ is a surgery presentation for $M\#\overline{M}$ and called a chain-mail link of $M$ (\cite{Ro}). Fig.~\ref{fig:chainmail L(5,2)} is the chain-mail link of the Lens space $L(5,2)$.

\begin{figure}\centering
\begin{tikzpicture}
  \draw (-0.5,-0.4)--(-0.5,0.4);
  \draw (0,-0.4)--(0,0.4);
  \draw (0.5,-0.4)--(0.5,0.4);
  \draw (1,-0.4)--(1,0.4);
  \draw (1.5,-0.4)--(1.5,0.4);
  \draw (-0.5,0.4) arc(0:180:1 and 0.6);
  \draw (-2.5,0.4) arc(180:270:0.3 and 0.2);
  \draw (0,0.4) arc(0:180:1.5 and 1.1);
  \draw (-3,0.4) arc(180:270:0.8 and 0.6);
  \draw (1,-0.4) arc(180:360:1.5 and 1.1);
  \draw (4,-0.4) arc(0:90:0.8 and 0.6);
  \draw (1.5,-0.4) arc(180:360:1 and 0.6);
  \draw (3.5,-0.4) arc(0:90:0.3 and 0.2);
  \draw (1.5,0.4) arc(180:90:1 and 0.6);
  \draw (2.5,1) arc(90:0:2 and 1.4);
  \draw (4.5,-0.4) arc(360:180:2 and 1.5);
  \draw (1,0.4) arc(180:90:1.5 and 1.1);
  \draw (2.5,1.5) arc(90:0:2.5 and 1.9);
  \draw (5,-0.4) arc(360:180:2.5 and 2);
  \draw (0.5,0.4) arc(180:90:2 and 1.6);
  \draw (2.5,2) arc(90:0:3 and 2.4);
  \draw (5.5,-0.4) arc(360:180:3 and 2.5);
  \draw [color=white,line width=2mm] (2.3,0) arc(0:180:1.8 and 0.8);
  \draw (2.3,0) arc(0:180:1.8 and 0.8);
  \draw (-2.2,0.2) arc(270:360:0.2);
  \draw (3.2,0.2) arc(270:180:0.2);
  \draw [color=white,line width=2mm] (3,0.4) arc(0:180:2.5 and 1);
  \draw (3,0.4) arc(0:180:2.5 and 1);
  \draw (-2.2,-0.2) arc(270:360:0.2);
  \draw (3.2,-0.2) arc(270:180:0.2);
  \draw [color=white,line width=2mm] (3,0) arc(0:180:2.5 and 1.1);
  \draw (3,0) arc(0:180:2.5 and 1.1);
  \draw (2.3,0) arc(360:270:0.3);
  \draw (-1.3,0) arc(180:270:0.3);
  \draw (-1,-0.3)--(2,-0.3);
  \draw [color=white,line width=2mm](-0.5,-0.4)--(-0.5,0);
  \draw [color=white,line width=2mm](0,-0.4)--(0,0);
  \draw [color=white,line width=2mm](0.5,-0.4)--(0.5,0);
  \draw [color=white,line width=2mm](1,-0.4)--(1,0);
  \draw [color=white,line width=2mm](1.5,-0.4)--(1.5,0);
  \draw (-0.5,-0.4)--(-0.5,0);
  \draw (0,-0.4)--(0,0);
  \draw (0.5,-0.4)--(0.5,0);
  \draw (1,-0.4)--(1,0);
  \draw (1.5,-0.4)--(1.5,0);
\end{tikzpicture}
\caption{}
\label{fig:chainmail L(5,2)}
\end{figure}

Note that, the signature $\sigma(L_M)$ of the framing matrix of the chain-mail link is zero and $\lambda(\theta)\lambda(\theta^{-1})=1$ for
a factorizable ribbon Hopf algebra (see~\cite{CoWe2}), so the normalization factor in $(2.2)$ 
$$[\lambda(\theta)\lambda(\theta^{-1})]^{-\frac{c(L_M)}{2}}[\lambda(\theta)/\lambda(\theta^{-1})]^{-\frac{\sigma(L_M)}{2}}=1.$$
Thus it is sufficient to find the link invariant $TR(L_M,H)$. Following the algorithm, the chain-mail link is decorated with $R$-matrix that is transformed into the cointegral via the Drinfeld map $(2.1)$. As a result, Lemma \ref{Henningslemma-1} shows that the contribution of the lower circles is equivalent to decorating the upper circles with coproduct factors of cointegrals. Thus we can work on the cointegral decorated diagram to evaluate the Hennings invariant. 
Lemma \ref{Henningslemma-2} states that the self crossings of the upper circles can be resolved and absorbed by cointegrals. The proof of these lemmas are the same as that for Lens spaces (see \cite{ChaWa}).
\begin{lemma}\label{Henningslemma-1}
\[
\begin{tikzpicture}[scale=0.8]
\draw [name path=ellipse] (0,0) ellipse (3cm and 1cm);
\draw (1,-1.5)--(1,1.5);
\draw (2,-1.5)--(2,1.5);
\draw (-1,-1.5)--(-1,1.5);
\draw (-2,-1.5)--(-2,1.5);
\draw (3,0) [color=white,line width=2mm] arc (0:180:3cm and 1cm);
\draw (3,0) arc (0:180:3cm and 1cm);
\draw [color=white,line width=2mm](1,-1.5)--(1,0);
\draw [color=white,line width=2mm](2,-1.5)--(2,0);
\draw [color=white,line width=2mm](-1,-1.5)--(-1,0);
\draw [color=white,line width=2mm](-2,-1.5)--(-2,0);
\draw (1,-1.5)--(1,0);
\draw (2,-1.5)--(2,0);
\draw (-1,-1.5)--(-1,0);
\draw (-2,-1.5)--(-2,0);
\draw [color=white,name path=vline](0.5,-1.5)--(0.5,1.5);
\draw [color=white, line width=2mm, name intersections={of=ellipse and vline}] (intersection-1) arc (80:100:3cm and 1cm);
\draw [dashed, name intersections={of=ellipse and vline}] (intersection-1) arc (80:100:3cm and 1cm);
\draw [color=white, line width=2mm, name intersections={of=ellipse and vline}] (intersection-2) arc (280:260:3cm and 1cm);
\draw [dashed, name intersections={of=ellipse and vline}] (intersection-2) arc (280:260:3cm and 1cm);
\draw (-3,0) circle (0.1pt) node[anchor=east] {$c^L_k$};
\node (=) at (4.2,0.2) {$=$};
\begin{scope}[xshift = 9cm]
\draw (-100pt,30pt)--(-100pt,-30pt);
\draw (-50pt,30pt)--(-50pt,-30pt);
\draw (100pt,30pt)--(100pt,-30pt);
\draw (50pt,30pt)--(50pt,-30pt);
\draw[dotted] (-30pt,10pt)--(30pt,10pt);
\fill (-100pt,-10pt) circle (1.5pt) node[anchor=west] {$\Lambda_{(1)}$};
\fill (-50pt,-10pt) circle (1.5pt) node[anchor=west] {$\Lambda_{(2)}$};
\fill (50pt,-10pt) circle (1.5pt) node[anchor=west] {$\Lambda_{(p-1)}$};
\fill (100pt,-10pt) circle (1.5pt) node[anchor=west] {$\Lambda_{(p)}$};
\end{scope}
\end{tikzpicture}
\]
\end{lemma}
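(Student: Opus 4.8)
The plan is to evaluate the left-hand diagram by the Hennings algorithm and to recognize the output as the Drinfeld map applied to the integral, invoking the identity $f_{R_{21}R}(\lambda)=\Lambda$. First I would record the decorations at all $2p$ crossings: each of the $p$ vertical upper strands meets the lower circle $c^L_k$ twice, once along the top arc (where $c^L_k$ passes over) and once along the bottom arc (where the strand passes over). Reading off the crossing rules of Section~2 and sliding the resulting beads across the maxima and minima of the ellipse---picking up antipodes as prescribed---I would obtain one product of Hopf-algebra elements along $c^L_k$ and, on each upper strand, a bead coming from its top and bottom crossings. Because $\lambda$ is a trace, $\lambda(xy)=\lambda(yx)$, the product along the closed loop $c^L_k$ may be read off starting from any base point, so the evaluation is well defined.

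The core step is to reorganize this decoration using quasitriangularity. Combining the top (over) and bottom (under) crossings of a single strand produces the monodromy $R_{21}R$, with one leg left on $c^L_k$ and the other deposited on that strand; the two opposite crossing types are exactly what is needed for the two factors $R_{21}$ and $R$. To treat all $p$ strands at once I would use the coproduct compatibilities $(\Delta\otimes id)R=R_{13}R_{23}$ and $(id\otimes\Delta)R=R_{13}R_{12}$ together with the intertwining relation $\Delta^{op}(x)R=R\Delta(x)$: these say that $c^L_k$ crossing the $p$ strands in succession may be replaced by $c^L_k$ crossing a single merged strand whose decoration carries the iterated coproduct $\Delta^{(p-1)}$. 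Consequently the total contribution of $c^L_k$ is $(\lambda\otimes\Delta^{(p-1)})(R_{21}R)$, with $\lambda$ applied to the leg on $c^L_k$ and $\Delta^{(p-1)}$ distributing the other leg across the $p$ strands.

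Since $\Delta^{(p-1)}$ commutes with evaluation against $\lambda$ in the first tensor slot,
$$(\lambda\otimes\Delta^{(p-1)})(R_{21}R)=\Delta^{(p-1)}\big((\lambda\otimes id)(R_{21}R)\big)=\Delta^{(p-1)}\big(f_{R_{21}R}(\lambda)\big),$$
and for a semisimple factorizable $H$ this equals $\Delta^{(p-1)}(\Lambda)=\sum_{(\Lambda)}\Lambda_{(1)}\otimes\cdots\otimes\Lambda_{(p)}$, which is precisely the labeling of the upper strands on the right-hand side.

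The step I expect to be the main obstacle is the bookkeeping compressed into the second paragraph. One must verify that the antipodes generated by the over/under crossings and by the slides across the extrema of the ellipse cancel, so that each strand really contributes the untwisted monodromy $R_{21}R$ and not an antipode-twisted variant, and one must check that the left-to-right order of the strands agrees with the order in which the legs $\Lambda_{(1)},\ldots,\Lambda_{(p)}$ are listed. I would organize this verification to reduce, one strand at a time, to the single-strand monodromy computation, and then apply the coproduct axioms uniformly to absorb the remaining strands---exactly the scheme used for the lens-space diagrams.
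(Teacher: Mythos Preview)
Your proposal is correct and follows exactly the approach the paper indicates: the paper does not spell out a proof here but says ``the proof of these lemmas are the same as that for Lens spaces (see \cite{ChaWa})'' and explains that ``the chain-mail link is decorated with $R$-matrix that is transformed into the cointegral via the Drinfeld map $(2.1)$.'' Your plan---read off the $R$-matrix decorations, use the quasitriangularity axioms to merge the $p$ strands into a single iterated coproduct, recognize the result as $\Delta^{(p-1)}\big(f_{R_{21}R}(\lambda)\big)=\Delta^{(p-1)}(\Lambda)$, and then track the antipode bookkeeping from extrema and crossing signs---is precisely this Drinfeld-map argument, and the caveats you flag (antipode cancellation, strand ordering) are the ones that have to be checked in the lens-space computation of \cite{ChaWa}.
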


\begin{lemma}\label{Henningslemma-2}
  \[
\begin{tikzpicture}[scale=0.9]\footnotesize
\draw (-80pt,30pt)--(-80pt,-30pt);
\draw (-40pt,30pt)--(-40pt,-30pt);
\draw (80pt,30pt)--(80pt,-30pt);
\draw (40pt,30pt)--(40pt,-30pt);
\draw[color=white,line width=2mm] (-100pt,10pt)--(100pt,10pt);
\draw (-100pt,10pt)--(-25pt,10pt);
\draw[dotted] (-25pt,10pt)--(25pt,10pt);
\draw (25pt,10pt)--(100pt,10pt);
\fill (-80pt,-20pt) circle (1.5pt) node[anchor=west] {$\Lambda_{(1)}$};
\fill (-40pt,-20pt) circle (1.5pt) node[anchor=west] {$\Lambda_{(2)}$};
\fill (40pt,-20pt) circle (1.5pt) node[anchor=west] {$\Lambda_{(p-1)}$};
\fill (80pt,-20pt) circle (1.5pt) node[anchor=west] {$\Lambda_{(p)}$};
\node (=) at (4,0.2) {$=$};
\begin{scope}[xshift = 8cm]
\draw (-80pt,30pt)--(-80pt,-30pt);
\draw (-40pt,30pt)--(-40pt,-30pt);
\draw (80pt,30pt)--(80pt,-30pt);
\draw (40pt,30pt)--(40pt,-30pt);
\draw (-100pt,10pt)--(-25pt,10pt);
\draw[dotted] (-25pt,10pt)--(25pt,10pt);
\draw (25pt,10pt)--(100pt,10pt);
\fill (-80pt,-20pt) circle (1.5pt) node[anchor=west] {$\Lambda_{(1)}$};
\fill (-40pt,-20pt) circle (1.5pt) node[anchor=west] {$\Lambda_{(2)}$};
\fill (40pt,-20pt) circle (1.5pt) node[anchor=west] {$\Lambda_{(p-1)}$};
\fill (80pt,-20pt) circle (1.5pt) node[anchor=west] {$\Lambda_{(p)}$};
\end{scope}
\end{tikzpicture}
  \]
\end{lemma}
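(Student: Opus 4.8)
The plan is to prove that the over-arc, once it runs across strands carrying the coproduct factors of a single cointegral $\Lambda$, is transparent: each crossing it makes contributes an $R$-matrix that is swallowed by the cointegral, leaving the strands undisturbed. The two ingredients are the quasitriangularity of $R$ (its coproduct and counit axioms) and the two-sided absorption property $x\Lambda=\Lambda x=\epsilon(x)\Lambda$. I would read the left-hand diagram as $p$ successive Hennings crossings in which the horizontal arc passes over the $j$-th vertical strand, and decorate each crossing with $R=\sum_i s_i\otimes t_i$ according to the rules of Section 2.2 (with the over-arc receiving the first tensor factor and the under-strand the second).

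First I would slide and collect decorations along each component, as permitted by regular isotopy. The over-arc then carries the product of the $p$ first factors, while the $j$-th under-strand carries its second factor multiplied into the label $\Lambda_{(j)}$ supplied by Lemma \ref{Henningslemma-1}. Using the iterated quasitriangularity axiom $(id\otimes\Delta^{(p-1)})(R)=R_{1,p+1}\cdots R_{1,2}$, I repackage these $p$ second factors as the single comultiplied element $\Delta^{(p-1)}(t)$, with the matching single first factor $s$ left on the over-arc. Because $\Delta$ is an algebra homomorphism and the under-strands already carry $\Delta^{(p-1)}(\Lambda)$, their combined label becomes
\[
\Delta^{(p-1)}(t)\,\Delta^{(p-1)}(\Lambda)=\Delta^{(p-1)}(t\Lambda)=\epsilon(t)\,\Delta^{(p-1)}(\Lambda),
\]
the last step being the cointegral property (and the two-sidedness of $\Lambda$ makes the order in which $t$ and $\Lambda$ meet on each strand irrelevant). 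The scalar $\epsilon(t)$ then acts on the leftover first factor through the counit axiom $\sum_i\epsilon(t_i)s_i=(id\otimes\epsilon)(R)=1$, so the over-arc carries only the unit and all $p$ crossings vanish --- this is exactly the right-hand diagram.

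The hard part will be the bookkeeping rather than the algebra. One must verify that the Hennings decoration conventions --- which insert $S(s_i)$ at crossings of the opposite sign and in general track powers of the balancing element --- really deliver the factor $t_i$ (rather than, say, $S(t_i)$ or a balanced variant) next to each $\Lambda_{(j)}$, and that the order of the collected first factors on the over-arc is compatible with the order of the $R_{1,j}$ in the iterated axiom. I expect these checks to be routine in the semisimple setting: there $S^2=id$ and $G=1$ remove all antipode-square and balancing corrections, the cointegral is two-sided, and both counit axioms $(\epsilon\otimes id)(R)=(id\otimes\epsilon)(R)=1$ hold, so the conclusion is insensitive to which tensor factor of $R$ lands on the cointegral strands. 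Since the crossing combinatorics here is identical to that of the chain-mail link for lens spaces, this last verification reduces to the computation already recorded in \cite{ChaWa}.
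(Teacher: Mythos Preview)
Your argument is correct and is essentially the approach the paper invokes: the paper gives no proof here beyond the sentence ``The proof of these lemmas are the same as that for Lens spaces (see \cite{ChaWa}),'' and your use of iterated quasitriangularity together with the cointegral absorption $t\Lambda=\epsilon(t)\Lambda$ and $(id\otimes\epsilon)(R)=1$ is exactly the computation carried out there. Your closing remark that the detailed crossing bookkeeping reduces to the lens-space calculation in \cite{ChaWa} already matches the paper's own deferral.
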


To write down the Hennigs invariant, we push all labeled coproduct factors along the upper circles to the base points. Then we multiply them and evaluate the resulting product by integrals. 

\begin{example}
	For Poincare homology 3-sphere $S$, the cointegral decorated surgery diagram of $S\#\overline{S}$ is shown in Fig.~\ref{fig:genus two} (ignore the vector field).  
	
	\begin{align*}Z_{Henn}(S\#\overline{S},H)=\sum\limits_{(\Lambda)}&\lambda\left(S(\Lambda'_{(3)})\Lambda_{(1)}\Lambda'_{(5)}S^{-1}(\Lambda_{(3)})\Lambda'_{(1)}S^{-1}(\Lambda_{(5)})\right)\\
	&\cdot\lambda\left(\Lambda_{(4)}S(\Lambda'_{(4)})S(\Lambda'_{(2)})\Lambda_{(2)}\Lambda'_{(6)}\Lambda'_{(7)}\Lambda'_{(8)}\right)
	\end{align*}
\end{example}

\subsection{Proof of Theorem} By Lemma \ref{Kuperberglemma-1}, \ref{Henningslemma-1} and \ref{Henningslemma-2}, we see that both $Z_{Kup}(M,f,H)$ and $Z_{Henn}(M\#\overline{M},H)$ can be written down by the same algorithm. That is one first creates coproduct factors of cointegrals along the lower circles and labels them at the intersections of the lower and upper circles, then multiplies the factors along upper circles following the same rule of antipode action and evaluates the product by integrals in the end. Therefore, $$Z_{Kup}(M,f,H)=Z_{Henn}(M\#\overline{M},H).$$

\begin{corollary}
Let $H$ be a Hopf algebra as in the theorem equipped with  anti-linear automorphism $\tau$ such that $(\tau\otimes\tau)(R)=R^{-1}_{21}$, then $Z_{Kup}(M,f,H)=|Z_{Henn}(M,H)|^2$.
\end{corollary}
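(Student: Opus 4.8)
The plan is to deduce the corollary from the Theorem together with two structural properties of the Hennings invariant. Since the Theorem already gives $Z_{Kup}(M,f,H)=Z_{Henn}(M\#\overline{M},H)$, it suffices to establish the factorization $Z_{Henn}(M\#\overline{M},H)=Z_{Henn}(M,H)\cdot\overline{Z_{Henn}(M,H)}=|Z_{Henn}(M,H)|^2$, and for this I would handle the two factors separately.

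\emph{Multiplicativity under connected sum.} First I would show $Z_{Henn}(M_1\#M_2,H)=Z_{Henn}(M_1,H)\,Z_{Henn}(M_2,H)$. A surgery presentation of $M_1\#M_2$ is the distant (split) union $L_1\sqcup L_2$ of surgery presentations $L_1,L_2$ of $M_1,M_2$. On a split link the weight $w_i$ assigned to a component by the algorithm of Section~2 depends only on that component, so $TR(L_1\sqcup L_2,H)=TR(L_1,H)\,TR(L_2,H)$; moreover $c(L_1\sqcup L_2)=c(L_1)+c(L_2)$ and $\sigma(L_1\sqcup L_2)=\sigma(L_1)+\sigma(L_2)$, so the normalization factor in $(2.2)$ also splits as a product. (Here $Z_{Henn}(S^3,H)=1$, computed from the empty link, so no connected-sum correction term is needed.) Applying this with $M_1=M$ and $M_2=\overline{M}$ reduces the corollary to the identity $Z_{Henn}(\overline{M},H)=\overline{Z_{Henn}(M,H)}$.

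\emph{Orientation reversal as complex conjugation.} The heart of the argument is to prove $Z_{Henn}(\overline{M},H)=\overline{Z_{Henn}(M,H)}$ using $\tau$. Reversing the orientation of $M$ replaces a surgery link $L$ by its mirror image $\overline{L}$, and in the Kauffman--Radford decoration the mirror operation replaces the decorating $R$-matrix by $R_{21}^{-1}$; the hypothesis $(\tau\otimes\tau)(R)=R_{21}^{-1}$ says exactly that $\tau$ carries the decoration at each crossing of $L$ to the decoration at the corresponding crossing of $\overline{L}$. I would then apply $\tau$ term-by-term to the whole computation of $Z_{Henn}(M,H)$: since $\tau$ is an algebra homomorphism it sends each component weight $w_i$ to the corresponding weight for $\overline{L}$, while its anti-linearity replaces the resulting scalar by its complex conjugate. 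Using that $\tau$ respects the comultiplication and the antipode (so that the sliding moves of Section~2 are preserved) and that $\lambda(\tau(x))=\overline{\lambda(x)}$ (which follows from uniqueness of the integral together with the normalization $\lambda(\Lambda)=1$), one obtains $Z_{Henn}(\overline{M},H)=\overline{Z_{Henn}(M,H)}$; the framing term conjugates as well, since $\sigma(\overline{L})=-\sigma(L)$ while $\lambda(\theta^{-1})=\overline{\lambda(\theta)}$.

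Combining the two steps yields $Z_{Kup}(M,f,H)=Z_{Henn}(M\#\overline{M},H)=|Z_{Henn}(M,H)|^2$. The hard part will be the orientation-reversal step: I must verify that the single algebraic hypothesis $(\tau\otimes\tau)(R)=R_{21}^{-1}$ really implements the mirror, i.e. that $\tau$ intertwines not only the crossing decorations but also the antipode sliding and the final evaluation by $\lambda$, and that the ribbon and framing data transform consistently. The cleanest way to organize this is to regard $\tau$ as an anti-linear isomorphism from $(H,R)$ to $H$ equipped with the conjugate ribbon structure whose $R$-matrix is $R_{21}^{-1}$, and to observe that the Hennings invariant of $L$ computed with $R_{21}^{-1}$ coincides with the Hennings invariant of $\overline{L}$ computed with $R$.
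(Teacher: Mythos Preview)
Your approach is essentially the same as the paper's: both use the Theorem to reduce to $Z_{Henn}(M\#\overline{M},H)$, factor via multiplicativity under connected sum, and then invoke $Z_{Henn}(\overline{M},H)=\overline{Z_{Henn}(M,H)}$. The only difference is that the paper does not reprove the orientation-reversal identity but simply cites Proposition~6.2 of Hennings' original paper \cite{H} (valid here since $\theta=u$ is the canonical ribbon element), whereas you sketch its proof directly; the verifications you flag---that $\tau$ intertwines the crossing decorations, commutes with $S$, and satisfies $\lambda\circ\tau=\overline{\lambda}$---are exactly the content of that proposition.
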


\begin{proof}

When equipped with such anti-linear automorphism $\tau$ and the canonical ribbon element $\theta=u$, by Proposition 6.2. in \cite{H},  $Z_{Henn}(\overline{M},H)=\overline{Z_{Henn}(M,H)}$, Hence, we have
$$Z_{Kup}(M,f,H)=Z_{Henn}(M\#\overline{M},H)=Z_{Henn}(M,H)Z_{Henn}(\overline{M},H)=|Z_{Henn}(M,H)|^2.$$

\end{proof}

\begin{figure}\centering
  \begin{tikzpicture}[scale=0.6, fill=white]\tiny
  \draw [thick, name path=circle-1] (0,0) circle(1);
  \draw [thick, name path=circle-2] (13,0) circle(1);
  \draw [thick, name path=circle-3] (0,-10) circle(1);
  \draw [thick, name path=circle-4] (13,-10) circle(1);
\draw [blue, thick] (2.5,0) to [out=90, in=0] (0,3) to [out=180, in=90] (-7,-7) to [out=-90, in=180] (0,-17) to [out=0, in=-90] (6.9,-10) to [out=90, in=-165] (12.5,-5) to [out=15, in=-90] (18,0.5) to [out=90, in=0] (12,5) to [out=180, in=90] (6,0) to [out=-90, in=40] (2,-5) to [out=-140, in=90] (-3,-10) to [out=-90, in=180] (0,-13) to [out=0, in=-90] (2.5,-10) to [out=90, in=-140] (5.5,-5) to [out=40, in=-90] (10,0) to [out=90, in=180] (13,3) to [out=0, in=90] (16,0) to [out=-90, in=30] (8.5,-5) to [out=-150, in=90] (4.7,-10) to [out=-90, in=0] (0,-15) to [out=180, in=-90] (-5,-10) to [out=90, in=-140] (-1,-5) to [out=40, in=-90] (2.5,0); 
\draw [red, thick] (0,0) to [out=180, in=90] (-6.2,-8) to [out=-90, in=180] (0,-16) to [out=0, in=-90] (5.8,-10) to [out=90, in=-160] (10.3,-5) to [out=20, in=-90] (17,0.5) to [out=90, in=0] (12.5,4) to [out=180, in=90] (7.5,0) to [out=-90, in=40] (4,-5) to [out=-140, in=150] (-1,-10);
\draw [red, thick] (14,-10) to [out=40, in=0] (12.5,-8) to [out=180, in=90] (10,-10) to [out=-90, in=180] (12.5,-12) to [out=0, in=-90] (15.5,-9.5) to [out=90, in=0] (12.5,-7) to [out=180, in=90] (9,-10) to [out=-90, in=180] (12.5,-13) to [out=0, in=-90] (16.5,-9.5) to [out=90, in=0] (12.5,-6) to [out=180, in=90] (7.95,-10) to [out=-90, in=180] (12.5,-14) to [out=0, in=-90] (19.3,-0.5) to [out=90, in=0] (12,6) to [out=180, in=90] (4.5,0) to [out=-90, in=40] (0.5,-5) to [out=-140, in=90] (-4,-10) to [out=-90, in=180] (0,-14) to [out=0, in=-90] (3.6,-10) to [out=90, in=-140] (7,-5) to [out=40, in=-40] (14,0);
\draw [green, thick, rounded corners=5pt] (1,0)--(9.7,0)--(9.7,-0.4)--(10.2,-0.4)--(10.3,0)--(12,0);
\draw [green, thick, rounded corners=5pt] (1,-10)--(9.7,-10)--(9.7,-10.4)--(10.2,-10.4)--(10.3,-10)--(12,-10);
  \draw [dashed,rounded corners=10pt] (2.5,7.5)--(2.5,0)--(2.3,-1);
  \draw [dashed] plot[smooth ,tension=1] coordinates{(2.3,-1) (0.4,-2.5) (-3.5,-3) (-7, -3)};
  \draw [->,line width=0.25mm] (2.5,3.5)--(2.5,3.4);
  \draw [->,line width=0.25mm] (1.3,-2.1)--(1.5,-2);
  \draw [->,line width=0.25mm] (-3.1,-3)--(-3,-3);
  \draw [dashed] plot[smooth ,tension=1] coordinates{(0,0) (1.5,1.5) (2, 7.5)};
  \draw [->,line width=0.25mm] (1.8,3.6)--(1.8,3.5);
  \draw [dashed] plot[smooth ,tension=1] coordinates{(0,0) (-2,3) (-7, 6)};
  \draw [->,line width=0.25mm] (-2.6,3.5)--(-2.5,3.4);
  \draw [dashed] plot[smooth ,tension=1] coordinates{(0,0) (-2,0.5) (-7, 1)};
  \draw [->,line width=0.25mm] (-3.8,0.7)--(-3.7,0.7);
  \draw [dashed] plot[smooth ,tension=1] coordinates{(0,0) (1.2,-1.1) (-2,-2) (-7, -2)};
  \draw [->,line width=0.25mm] (1.25,-0.7)--(1.15,-0.6);
  \draw [->,line width=0.25mm] (-3.7,-2.1)--(-3.6,-2.1);
  \draw [dashed,rounded corners=10pt] (10,7.5)--(10,0)--(10.2,-1);
  \draw [dashed] plot[smooth ,tension=1] coordinates{(10.2,-1) (12.6,-2.5) (16.5,-3) (20, -3)};
  \draw [->,line width=0.25mm] (10,3.4)--(10,3.5);
  \draw [->,line width=0.25mm] (11.5,-2)--(11.7,-2.1);
  \draw [->,line width=0.25mm] (16,-3)--(16.1,-3);
  \draw [dashed] plot[smooth ,tension=1] coordinates{(13,0) (11.5,1.5) (10.5, 7.5)};
  \draw [->,line width=0.25mm] (11,3.5)--(11,3.6);
  \draw [dashed] plot[smooth ,tension=1] coordinates{(13,0) (15,3) (20, 6)};
  \draw [->,line width=0.25mm] (15.5,3.4)--(15.6,3.5);
  \draw [dashed] plot[smooth ,tension=1] coordinates{(13,0) (15,0.5) (20, 1)};
  \draw [->,line width=0.25mm] (16.7,0.7)--(16.8,0.7);
  \draw [dashed] plot[smooth ,tension=1] coordinates{(13,0) (11.8,-1.1) (15,-2) (20, -2)};
  \draw [->,line width=0.25mm] (11.85,-0.6)--(11.75,-0.7);
  \draw [->,line width=0.25mm] (16.6,-2.1)--(16.7,-2.1);
  \draw [dashed] plot[smooth ,tension=1] coordinates{(3,7.5) (4,3) (6.5,1) (8.5,3) (9.5,7.5)};
  \draw [->, dashed, line width=0.25mm] (3.7,3.6)--(3.8,3.4);
  \draw [->, dashed, line width=0.25mm] (6.6,1)--(6.7,1);
  \draw [->, dashed, line width=0.25mm] (8.6,3.3)--(8.7,3.6);
  \draw [dashed] (0,0)--(13,0);
  \draw [->, dashed, line width=0.25mm] (1.5,0)--(1.4,0);
  \draw [->, dashed, line width=0.25mm] (3.4,0)--(3.5,0);
  \draw [->, dashed, line width=0.25mm] (6.5,0)--(6.6,0);
  \draw [->, dashed, line width=0.25mm] (9,0)--(9.1,0);
  \draw [->, dashed, line width=0.25mm] (11,0)--(10.9,0);
  \draw [dashed] plot[smooth ,tension=1] coordinates{(-7, -4) (0,-3.5) (6.5,-1.5) (13,-3.5) (20, -4)};
  \draw [->,line width=0.25mm] (-3.0,-3.9)--(-2.9,-3.9);
  \draw [->,line width=0.25mm] (0.4,-3.4)--(0.5,-3.38);
  \draw [->,line width=0.25mm] (6.5,-1.5)--(6.6,-1.5);
  \draw [->,line width=0.25mm] (11.7,-3.2)--(11.8,-3.25);
  \draw [->,line width=0.25mm] (15.9,-3.9)--(16.0,-3.9);
  \draw [dashed] (-7,-5)--(20,-5);
  \draw [->,line width=0.25mm] (-3,-5)--(-2.9,-5);
  \draw [->,line width=0.25mm] (1.6,-5)--(1.7,-5);
  \draw [->,line width=0.25mm] (6.5,-5)--(6.6,-5);
  \draw [->,line width=0.25mm] (11.4,-5)--(11.5,-5);
  \draw [->,line width=0.25mm] (15.9,-5)--(16,-5);
  \draw [dashed] plot[smooth ,tension=1] coordinates{(-7, -6) (0,-6.5) (6.5,-8.5) (13,-6.5) (20, -6)};
  \draw [->, dashed, line width=0.25mm] (3.7,-13.6)--(3.8,-13.4);
  \draw [->, dashed, line width=0.25mm] (6.6,-11)--(6.7,-11);
  \draw [->, dashed, line width=0.25mm] (8.6,-13.3)--(8.7,-13.6);
  \draw [dashed] (0,-10)--(13,-10);
  \draw [->, dashed, line width=0.25mm] (1.5,-10)--(1.4,-10);
  \draw [->, dashed, line width=0.25mm] (3.4,-10)--(3.5,-10);
  \draw [->, dashed, line width=0.25mm] (6.5,-10)--(6.6,-10);
  \draw [->, dashed, line width=0.25mm] (8.6,-10)--(8.7,-10);
  \draw [->, dashed, line width=0.25mm] (11,-10)--(10.9,-10);
  \draw [dashed] plot[smooth ,tension=1] coordinates{(3,-17.5) (4,-13) (6.5,-11) (8.5,-13) (9.5,-17.5)};
  \draw [->,line width=0.25mm] (-3.0,-6.1)--(-2.9,-6.1);
  \draw [->,line width=0.25mm] (0.9,-6.7)--(1,-6.75);
  \draw [->,line width=0.25mm] (6.5,-8.5)--(6.6,-8.5);
  \draw [->,line width=0.25mm] (15.9,-6.1)--(16.0,-6.1);
  \draw [->,line width=0.25mm] (11.7,-6.8)--(11.8,-6.75);
  \draw [dashed,rounded corners=10pt] (2.5,-17.5)--(2.5,-10)--(2.3,-9);
  \draw [->,line width=0.25mm] (2.5,-13.5)--(2.5,-13.4);
  \draw [->,line width=0.25mm] (1.3,-7.9)--(1.5,-8);
  \draw [->,line width=0.25mm] (-3.1,-7)--(-3,-7);
  \draw [dashed] plot[smooth ,tension=1] coordinates{(2.3,-9) (0.4,-7.5) (-3.5,-7) (-7, -7)};
  \draw [dashed] plot[smooth ,tension=1] coordinates{(0,-10) (1.5,-11.5) (2, -17.5)};
  \draw [->,line width=0.25mm] (1.8,-13.6)--(1.8,-13.5);
  \draw [dashed] plot[smooth ,tension=1] coordinates{(0,-10) (-2,-13) (-7, -16)};
  \draw [->,line width=0.25mm] (-2.6,-13.5)--(-2.5,-13.4);
  \draw [dashed] plot[smooth ,tension=1] coordinates{(0,-10) (-2,-10.5) (-7, -11)};
  \draw [->,line width=0.25mm] (-3.8,-10.7)--(-3.7,-10.7);
  \draw [dashed] plot[smooth ,tension=1] coordinates{(0,-10) (1.2,-8.9) (-2,-8) (-7, -8)};
  \draw [->,line width=0.25mm] (1.25,-9.3)--(1.15,-9.4);
  \draw [->,line width=0.25mm] (-3.7,-7.9)--(-3.6,-7.9);
  \draw [dashed,rounded corners=10pt] (10,-17.5)--(10,-10)--(10.2,-9);
  \draw [dashed] plot[smooth ,tension=1] coordinates{(10.2,-9) (12.6,-7.5) (16.5,-7) (20, -7)};
  \draw [->,line width=0.25mm] (10,-13.4)--(10,-13.5);
  \draw [->,line width=0.25mm] (11.5,-8)--(11.7,-7.9);
  \draw [->,line width=0.25mm] (16,-7)--(16.1,-7);
  \draw [dashed] plot[smooth ,tension=1] coordinates{(13,-10) (11.5,-11.5) (10.5, -17.5)};
  \draw [->,line width=0.25mm] (11,-13.5)--(11,-13.6);
  \draw [dashed] plot[smooth ,tension=1] coordinates{(13,-10) (15,-13) (20, -16)};
  \draw [->,line width=0.25mm] (15.5,-13.4)--(15.6,-13.5);
  \draw [dashed] plot[smooth ,tension=1] coordinates{(13,-10) (15,-10.5) (20, -11)};
  \draw [->,line width=0.25mm] (16.7,-10.7)--(16.8,-10.7);
  \draw [dashed] plot[smooth ,tension=1] coordinates{(13,-10) (11.8,-8.9) (15,-8) (20, -8)};
  \draw [->,line width=0.25mm] (11.85,-9.4)--(11.75,-9.3);
  \draw [->,line width=0.25mm] (16.6,-7.9)--(16.7,-7.9);
  \fill [white] (0,0) circle(0.95);
  \fill [white] (13,0) circle(0.95);
  \fill [white] (0,-10) circle(0.95);
  \fill [white] (13,-10) circle(0.95);
  \draw (2.4,0) node[anchor=north west] {$\Lambda_{(1)}$}; 
  \draw (4.4,0) node[anchor=north west] {$\Lambda_{(2)}$};
  \draw (5.9,0) node[anchor=north west] {$\Lambda_{(3)}$}; 
  \draw (7.4,0) node[anchor=north west] {$\Lambda_{(4)}$};
  \draw (10,-0.1) node[anchor=north west] {$\Lambda_{(5)}$}; 
  \draw (2.4,-10) node[anchor=north west] {$\Lambda'_{(1)}$}; 
  \draw (3.5,-10) node[anchor=north west] {$\Lambda'_{(2)}$};
  \draw (4.6,-10) node[anchor=north west] {$\Lambda'_{(3)}$}; 
  \draw (5.7,-10) node[anchor=north west] {$\Lambda'_{(4)}$};
  \draw (6.8,-10) node[anchor=north west] {$\Lambda'_{(5)}$}; 
  \draw (7.85,-10) node[anchor=north west] {$\Lambda'_{(6)}$}; 
  \draw (8.9,-10) node[anchor=north west] {$\Lambda'_{(7)}$}; 
  \draw (10,-10.1) node[anchor=north west] {$\Lambda'_{(8)}$}; 
  \end{tikzpicture}
\caption{}
\label{fig:genus two}
\end{figure}

\end{document}